\documentclass{article}

\usepackage[table]{xcolor}
\usepackage{geometry}
\usepackage{amsthm,amssymb,amsfonts,amsmath}
\usepackage{graphicx}
\usepackage{url}

\theoremstyle{theorem}
\newtheorem{theorem}{Theorem}
 \newtheorem{lemma}[theorem]{Lemma}
\newtheorem{proposition}[theorem]{Proposition}

\theoremstyle{definition}
\newtheorem*{definition}{Definition}

\begin{document}

\title{Hypercups: Flipping Cups With More Than Two Sides}
\author{Micah Dykhuis and Lauren Keough and Sydney Lipton } 

\date{}

\maketitle

    \begin{abstract}
  In their 2010 article entitled ``How to invert $n$ cups $m$ at a time" in {\it Mathematics Today}, Man-Keung Siu and Ian Stewart extend a classic trick in which $3$ cups are flipped $2$ at a time to any number of cups, flipped any number at a time.  Here, we generalize another feature of the problem. A typical cup in our universe, as far as we know, has two ``sides", right side up and upside down. But what if the cup had $k$ ``sides" with $k\ge 2$? We call these $k-$hypercups. The classic trick depends on parity. In this article, we show the analogous hypercups trick depends on greatest common divisors. We generalize all of Siu and Stewart's results to $k-$hypercups. Our results imply the known results for $2-$hypercups, also known as cups.
  \end{abstract}

A mathematician has many powers - the powers of deduction, generalization, and problem solving, to name a few. But as Uncle Ben in Spider Man said, ``with great power comes great responsibility". For example, a mathematician must always show their work, and, as we must warn you before we tell you a trick, a mathematician must never use their mathematical knowledge to trick innocent people.

Perhaps you have seen this trick on social media. For example, one such video is called ``THE IMPOSSIBLE 3 CUP TRICK!! WHY CAN'T I DO IT??  \#wow \#magic \#cool \#impossible" \cite{facebook}. To start, you need 3 cups (or use our Desmos model at \url{https://tinyurl.com/3cupstrick}). You begin by arranging the cups as in Figure~\ref{fig:winningcombo}. 
\begin{figure}[h!]
    \centering
    \includegraphics[width=0.5\linewidth]{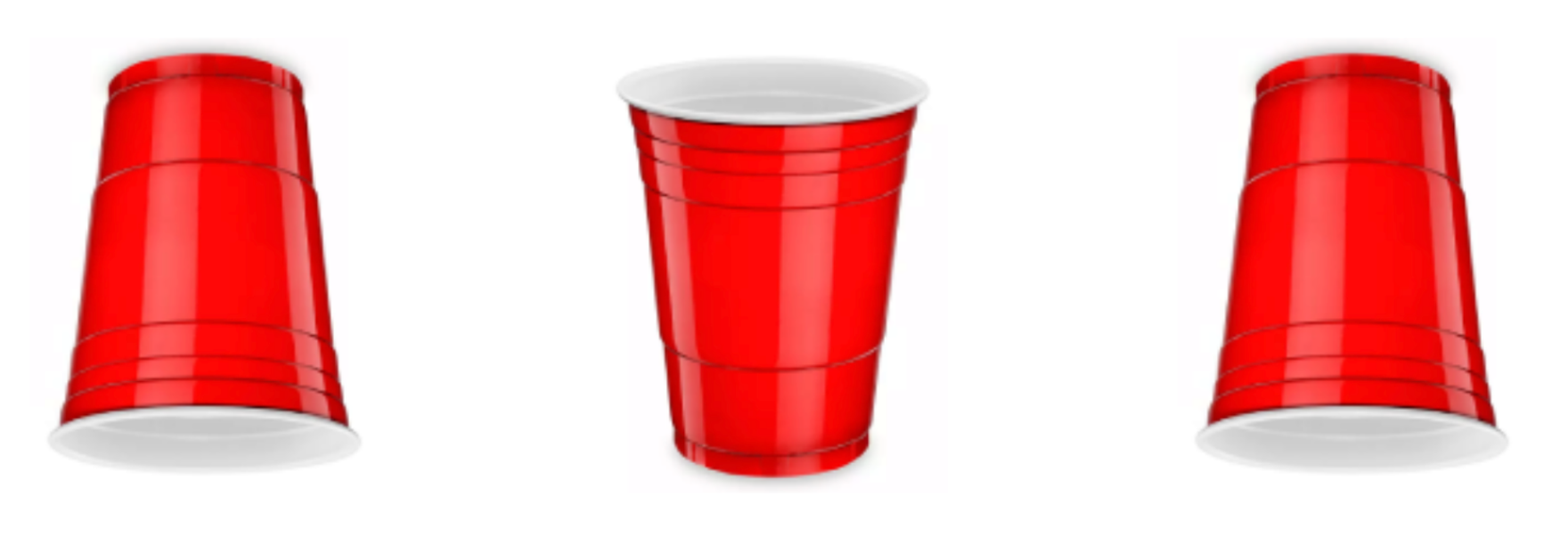}
    \caption{The initial configuration you use in the 3 cup trick.}
    \textit{Figure 1 alt text: Three red solo cups; the first and last are upside down and the center is right side up.}
    \label{fig:winningcombo}
\end{figure}

 Now you can perform the trick on the target. Show them that you can flip 2 cups at a time to get the cups all right side up. Of course, one can do this in one move, but for the sake of trickery, do it in 3 moves, say flipping the first two, then the outside two, and then the first two again. No problem.

Now, from the finishing configuration (all cups right side up), flip the middle cup to down, as in Figure~\ref{fig:losingcombo} and ask your target to flip all the cups to right side up by flipping 2 at a time.\footnote{The second author tried this trick with a middle school class early in the morning, figuring that middle schoolers would not be very observant before 8AM. She was wrong. They quickly pointed out the second initial configuration was not the same as the first.}

\begin{figure}[h!]
    \centering
    \includegraphics[width=0.5\linewidth]{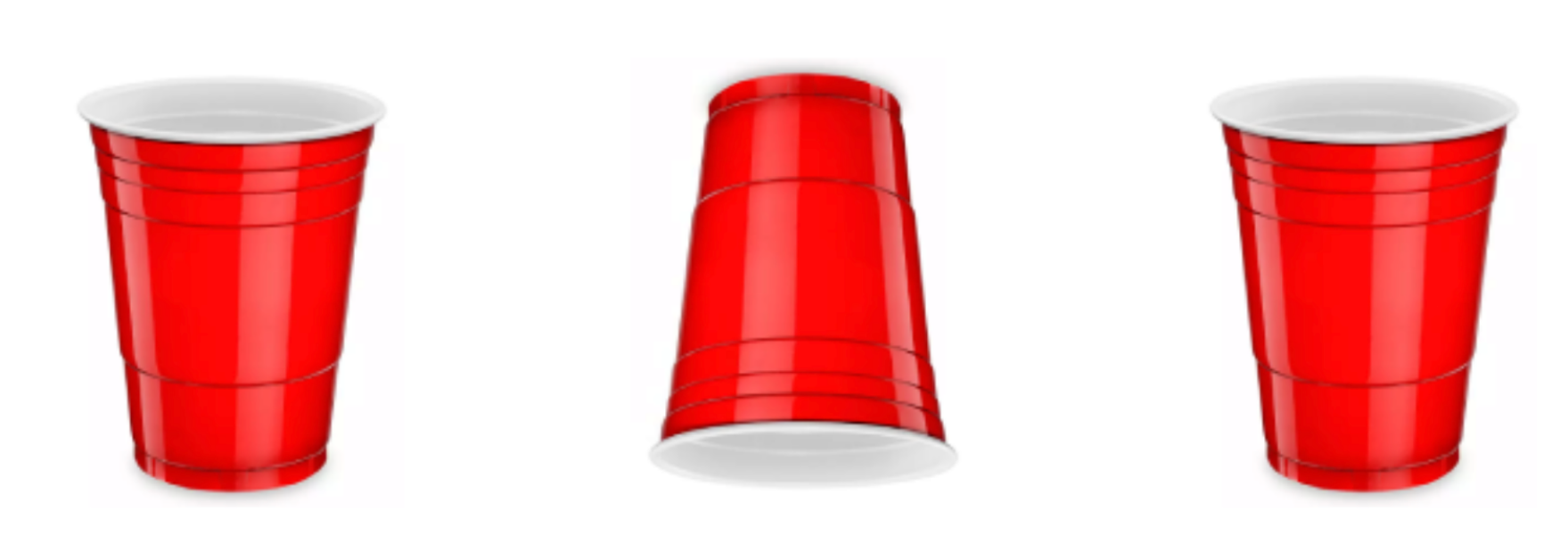}
    \caption{The configuration the target starts with in the 3 cup trick.}
    \textit{Figure 2 alt text: Three red solo cups; the first and last are right side up and the center is upside down.}
    \label{fig:losingcombo}
\end{figure}

If you try it for yourself, we guarantee that you will not succeed. Why? Because math! More specifically, from any configuration, if you flip two cups, one of the following three scenarios will occur  
\begin{enumerate}
\renewcommand{\theenumi}{(\alph{enumi})}
    \item flip two right side up cups to upside down. 
    \item two upside down cups to right side up. 
    \item one right side up cup to down and one upside down cup to right side up. 
\end{enumerate}

Let's consider the effect of each of these scenarios on the net change of the number of right side up cups. In scenario (a) the net change is $+2$ right side up cups, in scenario (b) the net change is $-2$, and in scenario (c), the net change is $+0$. In the initial configuration in Figure~\ref{fig:losingcombo}, the number of right side up cups is $2$, and one needs that number to be $3$ to finish the task. But one cannot get from $2$ to $3$ using a combination of $+2, -2,$ and $+0$. This is an issue of parity, as $2$ is even and $3$ is odd.

Using mathematicians' power responsibly, Man-Keung Siu and Ian Stewart generalize the problem (not the trick) to any number of cups, flipped any number at a time \cite{SS10}. In their case, start with $n$ cups, all right side up. When can one, flipping precisely $m$ cups at a time, get to all cups upside down, and what is the minimum number of moves to do so? Siu and Stewart show that whether a solution exists depends on the parity of $m$ and $n$. More specifically, a solution exists if and only if it $n$ is even or $m$ is odd. In other words, the only situation where a solution does not exist is if $n$ is odd and $m$ is even simultaneously. Moreover, they provide the number of moves depending on the parity of $m$ and $n$, and on the comparison of $2m$ and $n$, see Table~\ref{tab:minmoves2cups}.

\begin{table}[h!]
\centering
\scalebox{0.8}{
\begin{tabular}{ |m{2.5cm}|m{2.5cm}|m{2.5cm}|m{2.5cm}|m{2.5cm}|  }
 \hline
\rowcolor{black} \multicolumn{5}{|c|}{\textcolor{white}{Minimum number of moves for inversion. Here $\lceil x\rceil$ is the}} \\
\rowcolor{black} \multicolumn{5}{|c|}{\textcolor{white}{  \textit{ceiling function}, the smallest integer $\geq x$.}} \\
\hline
 & $n   $ even, $ m$ even & $n $ odd, $m$ odd & $n$ odd, $m$ even & $n$ even, $m$ odd\\
 \hline
 $2m \leq n$\vspace{0.25in}   & $\lceil \frac{n}{m} \rceil$    & $2 \lceil \frac{n-m}{2m}\rceil +1$ &  no solution & $2\lceil \frac{n}{2m} \rceil$\\
 \hline
 $2m > n$ \vspace{0.1in}&   $1$ if $m = n$  &  $1$ if $m = n$    & no solution & $2\lceil \frac{n}{2(n-m)} \rceil$\\
 & $3$ if $m > n$ & $3$ if $m > n$ & &\\
 \hline
\end{tabular}}
\caption{Minimum number of moves for $2$-hypercups from \cite{SS10}}
\label{tab:minmoves2cups}
\end{table}

Here, we generalize the problem again, to $k-$hypercups, an idea first introduced at a talk \cite{SSDTalk}. For $k-$hypercups, instead of right side up and upside down, we have states $0,1,\dots, k-1$ for $k\ge 2$. Instead of flipped, we say hypercups are rotated.

\begin{definition}\label{def:hypercup}
    A \textit{$k-$hypercup} is a mathematical object which has possible states $0,1,2,\dots, k-1$. When a hypercup is rotated, its state increases by $1$ modulo $k$. In particular, if a hypercup in state $k-1$ is rotated, the hypercup returns to state $0$.
\end{definition}

For the remainder of the paper we assume $n,m,k\in\mathbb{N}$, with $m\le n$, and $k\ge 2$. For the $(n,m,k)$ hypercup problem we start with $n$ $k-$hypercups in state $0$, and rotate $m$ hypercups at a time. Our goal is to end with all hypercups in state $k-1$. 

\begin{definition}\label{def:solved}
    The $(n,m,k)$ hypercup problem is \textit{solvable} when the $n$ $k-$hypercups, all starting in state $0$, can be rotated $m$ at a time to result in all hypercups having state $k-1$. Similarly, the $(n,m,k)$ hypercup problem is \textit{solved} when $s=n(k-1)$, where $s$ is the sum of the states of all $n$ hypercups.
\end{definition}

You can try the $(n,m,k)$ hypercup problem at \url{https://tinyurl.com/HypercupsTrick}. Choose different numbers for $n$, $m$, and $k$, and then starting with all $n$ hypercups at state $0$, and rotating $m$ at a time, get all hypercups in state $k-1$. For what $n,m,k$ can you complete the task? Spoiler alert - the result is in the next theorem!

To begin exploring the problem, we used the NetworkX Python Library \cite{NetworkX} to generate directed graphs to find solutions to the hypercup problem. Each node represented a possible set of states for the $n$ $k$-hypercups, and an arc connecting two nodes represented the move that changes the set of states of the former node to the latter. We then used a Breadth First Search to generate data about the solvability and the minimum number of moves for $2\le k\le 10$ and $m\le n$ for varying maximum values of $n$ due to a computational time complexity of $\mathcal{O}\left(\binom{n+k-1}{n} \cdot \binom{n}{m} \cdot n \log n\right)$. Like Siu and Stewart \cite{SS10}, we saw very few obvious patterns in the data we generated, so we were quite surprised by the following result!




\begin{theorem}\label{thm:bigiff}
    The $(n,m,k)$ hypercup problem is solvable if and only if $\gcd(m,k)\mid n$.
\end{theorem}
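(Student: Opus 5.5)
The plan is to prove necessity with an invariant modulo $k$, and sufficiency by choosing how many times each hypercup is rotated and then realizing that choice as a sequence of moves.

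\emph{Necessity.} Consider the sum $s$ of the states of all $n$ hypercups, read modulo $k$. A single move rotates exactly $m$ hypercups, and each rotation adds $1$ to a state modulo $k$. So every move changes $s$ by $m$ modulo $k$. After $t$ moves we therefore have $s \equiv tm \pmod k$. The solved position has $s = n(k-1) \equiv -n \pmod k$. Solvability thus requires an integer $t$ with $tm \equiv -n \pmod k$. Such a congruence has a solution exactly when $\gcd(m,k)\mid n$, which gives the forward direction.

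\emph{Sufficiency, reduction to counting.} Assume $g=\gcd(m,k)$ divides $n$. If hypercup $i$ is rotated $x_i$ times over $t$ moves, it ends in state $k-1$ if and only if $x_i \equiv k-1 \pmod k$. The counts must also satisfy $\sum_i x_i = tm$ and $0\le x_i\le t$, since no move rotates the same hypercup twice. Conversely, any nonnegative integers $x_i\le t$ with $\sum x_i = tm$ come from an actual sequence of $t$ moves. To see this, list hypercup $1$ repeated $x_1$ times, then hypercup $2$ repeated $x_2$ times, and so on, giving a word of length $tm$. Write this word column by column into a $t\times m$ array. Each hypercup then occupies at most $t$ consecutive cells in column order, so it never appears twice in the same row. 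The rows are the moves. It therefore suffices to produce such numbers $x_i$.

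\emph{Choosing the counts.} If $m=n$, every move rotates every hypercup, and $t=k-1$ moves solve the problem directly. Now assume $m<n$. Because $g\mid n$, the solutions of $tm\equiv -n \pmod k$ form a full residue class modulo $k/g$, so there are arbitrarily large valid $t$. For such $t$, the number $tm-n(k-1)$ is divisible by $k$; call the quotient $A$. For $t$ large, $A\ge 0$. Set $x_i = (k-1)+k a_i$, where the $a_i\ge 0$ sum to $A$ and are spread as evenly as possible, so that any two differ by at most $1$. Each $x_i \equiv k-1 \pmod k$, and $\sum x_i = n(k-1)+kA = tm$. The largest $x_i$ is at most $tm/n + k$. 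This is at most $t$ once $t(n-m)/n \ge k$, which holds for all sufficiently large $t$ because $m<n$.

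The main point of care is the realization step, namely that the bound $x_i\le t$ together with $\sum x_i = tm$ suffices. The column-wise filling of the $t\times m$ array settles it. The remaining steps are routine modular arithmetic and inequalities once $t$ is taken large in the correct residue class.
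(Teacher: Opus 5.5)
Your proof is correct and has the same skeleton as the paper's, but it realizes the moves differently. Necessity is the same invariant: the paper's Lemma~\ref{lem:movesum} says each move adds $m-jk\equiv m \pmod k$. For sufficiency, your $t$, your $A$, and your counts $x_i=(k-1)+ka_i$ (with the $a_i$ spread evenly) are exactly the paper's $M(n,m,k)$, $j_\sigma$, and initial thirsts. Your requirement $\max_i x_i\le t$ is precisely the Reset Condition.

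The real difference is how you turn a feasible count vector into actual moves. The paper runs the greedy Thirsty Algorithm, always rotating the $m$ thirstiest hypercups. It then needs a contradiction argument to show that no hypercup's thirst ever exceeds the number of remaining moves, so the greedy never stalls. You instead fill a $t\times m$ array column by column with wrap-around (McNaughton's rule from scheduling). This gives an explicit schedule immediately. It also proves the stronger statement that \emph{every} vector with $0\le x_i\le t$ and $\sum_i x_i=tm$ is realizable in $t$ moves, so it would serve equally well in the paper's proof of Theorem~\ref{thm:numberofmoves}. The greedy version gives an adaptive procedure one can run by hand; yours gives a shorter, self-contained proof.

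Your other choices are also slightly tidier. You get arbitrarily large valid $t$ directly from the residue class modulo $k/\gcd(m,k)$ rather than via Bezout. Your sufficient bound $t(n-m)/n\ge k$ is simpler than the paper's inequality~(\ref{ineq:choosex0}). Since that inequality divides by $n-m$, your separate treatment of $m=n$ covers a case that the paper's proof of Lemma~\ref{lem:jsigexists} skips.
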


Stay tuned to see the proof of Theorem~\ref{thm:bigiff}. For now, consider the $2-$hypercups case. Note $\gcd(m,2)\in\{1,2\}$, and $\gcd(m,2) = 2$ if and only if $m$ is even. Additionally $2\mid n$ if and only if $n$ is even. So there is no solution exactly when $n$ is odd and $m$ is even. Thus the result on solvability of the $2-$hypercup problem in \cite{SS10} follows from Theorem~\ref{thm:bigiff}.

Moreover, we find the minimum number of moves required. Let $M(n,m,k)$ be the least number of moves require to solve the $(n,m,k)$ hypercup problem. 

\begin{theorem}\label{thm:numberofmoves}
    For all $n,m,k\in\mathbb{N}$ with $k\ge 2$ and $n\ge m$, if the $(n,m,k)$ hypercup problem is solvable, then the minimum number of moves required is
        \[ M(n,m,k) = \frac{n(k-1)+j_\sigma k}{m}\]
    where $j_\sigma$ is the least nonnegative integer such that 
    \begin{enumerate}
        \item \textbf{Divisibility Condition}: $m\mid (n(k-1)+j_\sigma k)$, and
        \item \textbf{Reset Condition}: $\frac{n(k-1)+j_\sigma k}{m}\ge (k-1) + k\left\lceil\frac{j_\sigma}{n}\right\rceil$.
    \end{enumerate} 
\end{theorem}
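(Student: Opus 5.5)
Any solution with $T$ moves is determined by how many times $r_i\ge 0$ each hypercup $i$ is rotated. I would therefore reformulate the problem as a question about these rotation counts and prove matching lower and upper bounds.

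\textbf{Step 1: the feasibility lemma.} Cup $i$ ends in state $k-1$ exactly when $r_i\equiv k-1 \pmod k$, that is, $r_i=(k-1)+kt_i$ with $t_i\ge 0$. Since each move rotates exactly $m$ distinct cups, we need $\sum_i r_i = mT$ and $r_i\le T$ for every $i$. Conversely, I would show that any vector $(r_1,\dots,r_n)$ with $\sum r_i=mT$ and $0\le r_i\le T$ is realizable by $T$ moves of $m$ distinct cups. The plan is a greedy or round-robin argument: list each cup $i$ repeated $r_i$ times, concatenate the lists into a sequence of length $mT$, and let move $s$ consist of the entries in positions $s, s+T, s+2T,\dots$. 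Because each cup occupies a contiguous block of length $r_i\le T$, no move contains the same cup twice. So the problem is solvable in $T$ moves if and only if there exist $t_i\ge 0$ with
\[ mT = n(k-1)+kj, \qquad j=\sum_i t_i, \qquad (k-1)+kt_i\le T \text{ for all } i. \]

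\textbf{Step 2: identifying $j_\sigma$.} For a fixed total $j$, the constraint $\max_i t_i \le (T-(k-1))/k$ is easiest to satisfy by spreading $j$ as evenly as possible, so the best achievable maximum is $\lceil j/n\rceil$. Hence $T$ moves suffice exactly when some $j\ge 0$ satisfies $mT=n(k-1)+kj$ and $T\ge (k-1)+k\lceil j/n\rceil$. This is precisely the Divisibility and Reset Conditions with $T=(n(k-1)+jk)/m$. Since $T$ is strictly increasing in $j$, the minimum $T$ corresponds to the least $j$ satisfying both conditions, which is $j_\sigma$; this gives the formula for $M(n,m,k)$.

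\textbf{Step 3: existence of $j_\sigma$ and solvability.} Theorem~\ref{thm:bigiff} guarantees that some solution exists, and Step 1 turns any solution into such a $j$. Alternatively, $\gcd(m,k)\mid n$ gives infinitely many $j$ satisfying the divisibility condition, forming an arithmetic progression with difference $m/\gcd(m,k)$. Along this progression $T$ grows like $kj/m$ while the reset bound grows like $kj/n$, and $m\le n$, so the reset condition eventually holds. The edge case $m=n$ needs checking, but then the reset bound becomes $T\ge (k-1)+k\lceil j/n\rceil$ with $T=(k-1)+kj/n$, which holds whenever $n\mid j$.

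\textbf{Main obstacle.} The step that needs care is the realizability claim in Step 1, that $r_i\le T$ and $\sum r_i=mT$ suffice. I would prove it with the wrap-around construction above, explicitly checking that a block of length at most $T$ placed in a cyclic arrangement of $T$ columns hits each column at most once. Everything else is bookkeeping.
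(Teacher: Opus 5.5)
Your proof is correct. Its skeleton matches the paper's: per-cup rotation counts $(k-1)+kt_i$, a total of $mT$, each count at most $T$, then minimizing over the reset total $j$. Both halves, however, are argued differently.

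For necessity, you obtain $mT=n(k-1)+kj$ directly by counting rotations cup by cup, rather than through the state-sum bookkeeping of Lemma~\ref{lem:movesum}. Your Step~2 then rests on the pigeonhole fact that some cup absorbs at least $\lceil j/n\rceil$ resets. State that direction explicitly, since it is exactly what forces the Reset Condition. (The paper's sentence about distributing ``at most'' $\lceil j_\sigma/n\rceil$ resets per cup is phrased the other way round.)

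For sufficiency, the paper runs the adaptive greedy Thirsty Algorithm. It needs an invariant proved by contradiction: no thirst ever exceeds the number of remaining moves, so at least $m$ cups are always available. Your wrap-around schedule (McNaughton's rule from scheduling) is instead explicit and non-adaptive, with a one-line check. A contiguous block of length $r_i\le T$ meets each residue class mod $T$ at most once, and since rotations commute only the counts $r_i$ matter.

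Your route yields a clean standalone realizability lemma, which makes the ``if and only if'' of Step~2 and the monotonicity conclusion immediate. The paper's greedy version instead gives a strategy that can be played move by move without planning the whole schedule.

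In Step~3, your first argument is all the theorem needs: the solvability hypothesis plus Step~1 already produces an admissible $j$. In the $m=n$ aside, just note that $j=0$ works.
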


Good  news - the formula in Theorem~\ref{thm:numberofmoves} exactly matches Siu and Stewart's results in Table~\ref{tab:minmoves2cups} for $2-$hypercups. The proof of that is by case analysis and we leave it as an exercise to the reader\footnote{We know we said we should always show our work, but this work is not particularly insightful.}. We admit the formula looks a bit complicated, but on a more positive note - the formula for the $2$-hypercups requires $6$ cases (see Table~\ref{tab:minmoves2cups}) and here we have one formula that also generalizes to $k-$hypercups for any $k\ge 2$. Additionally, we'll show that what $j_\sigma$ represents and the two given conditions can be explained naturally in relation to the problem. 

\section*{The Necessity of $\gcd(m,k)\mid n$}\label{sec:forward}

Thanks for staying tuned for the proof of Theorem~\ref{thm:bigiff}. In this section we prove the forwards direction, that is if the $(n,m,k)$ hypercup problem has a solution then $\gcd(m,k)\mid n$. First we state and prove a lemma about the sum of the states of the hypercups. Recall all hypercups start in state $0$, contributing $0$ to the sum of the states. When a hypercup is rotated the sum increases by $1$ unless the hypercup is in state $k-1$, which will then subtract $k-1$ from the sum.

\begin{lemma}\label{lem:movesum}
    In the $(n,m,k)$ hypercup problem, any move adds $m-jk$ to the sum of the states of the hypercups, for some $j\in \mathbb{Z}$, $0\le j\le m$. At any move, say $r$, the sum of the states of the hypercups is then $\sum_{i=1}^r m-j_ik$ for some $j_i$ with $0\le j_i \le m$ for $1\le i \le r$. Moreover, $j_i$ corresponds to the number of hypercups reset to $0$ on move $i$.
\end{lemma}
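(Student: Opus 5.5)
We will prove Lemma~\ref{lem:movesum} by a per-hypercup analysis of a single move, followed by induction on the number of moves. The starting observation comes straight from the definition of a $k-$hypercup. When one hypercup is rotated, it either goes from state $a$ to $a+1$ with $0\le a\le k-2$, changing the sum by $+1$, or it goes from state $k-1$ to $0$, changing the sum by $-(k-1)=1-k$. In both cases the change can be written as $1-\varepsilon k$, where $\varepsilon\in\{0,1\}$ and $\varepsilon=1$ exactly when that hypercup is reset to $0$.

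For a single move, we sum this over the $m$ distinct hypercups being rotated. Since they are distinct, each one is rotated exactly once in the move, so the contributions simply add. The total change is
\[\sum_{\ell=1}^m (1-\varepsilon_\ell k) = m - jk,\qquad j=\sum_{\ell=1}^m \varepsilon_\ell .\]
Here $j$ is the number of rotated hypercups that were in state $k-1$, that is, the number reset to $0$. It follows at once that $j\in\mathbb{Z}$ and $0\le j\le m$. This gives the first claim and the interpretation of $j$.

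For the cumulative statement, we induct on $r$. Before any move the sum is $0$, because every hypercup starts in state $0$; this is the empty sum. Suppose that after $r-1$ moves the sum equals $\sum_{i=1}^{r-1}(m-j_ik)$. Move $r$ then adds $m-j_rk$, where $j_r$ is the number of hypercups reset on move $r$ and $0\le j_r\le m$. This gives the formula for $r$.

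There is no real obstacle. The one point needing care is that the $m$ rotated hypercups in a move are distinct, so no single hypercup passes through the reset within one move more than once and the per-cup changes add independently. Stating this explicitly is what justifies the bound $j\le m$.
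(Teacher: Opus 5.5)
Your proof is correct and follows essentially the same argument as the paper, which counts the $j$ rotated hypercups in state $k-1$ (contributing $-j(k-1)$) against the remaining $m-j$ (contributing $+1$ each) to get $m-jk$, then uses that the initial sum is $0$. Your per-cup form $1-\varepsilon k$ and the explicit induction on $r$ are just a more formal writing of the same reasoning.
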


\begin{proof}
     On each move, we rotate $m$ hypercups at once. Say there are $0\le j \le m$ hypercups that are be in state $k-1$. Those $j$ hypercups reset to state $0$, while the other $m-j$ hypercups increase by $1$. Thus, the total sum decreases by $j(k-1)$ and increases by $1\cdot(m-j)$. Therefore, the total sum changes by 
        \begin{align*}
            (m-j)-j(k-1) = m-jk
        \end{align*}
    for some $0\le j\le m$. Since the sum of the states of the hypercups is initially $0$, and each move adds $m-jk$ to the sum of the states, the rest follows immediately.
        \end{proof}

The following proposition proves the forward direction of Theorem~\ref{thm:bigiff}.

\begin{proposition}
   If the $(n,m,k)$ hypercup problem is solvable then $\gcd(m,k)\mid n$. 
\end{proposition}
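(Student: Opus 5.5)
We argue via the invariant of Lemma~\ref{lem:movesum}: the sum of the states after any number of moves is an integer combination of $m$ and $k$. Suppose the $(n,m,k)$ hypercup problem is solvable, say in $r$ moves. By Definition~\ref{def:solved}, after these $r$ moves the sum of the states is $s=n(k-1)$. By Lemma~\ref{lem:movesum}, there are integers $j_1,\dots,j_r$ with $0\le j_i\le m$ such that
\[
n(k-1) = \sum_{i=1}^r (m - j_i k) = rm - \Big(\sum_{i=1}^r j_i\Big)k .
\]

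Next, set $d=\gcd(m,k)$. Since $d\mid m$ and $d\mid k$, the right-hand side is divisible by $d$, so $d\mid n(k-1)$. Writing $n(k-1)=nk-n$ and using $d\mid nk$, we get $d\mid n$, as required.

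There is no real obstacle here. The one step to make explicit is the final one: we should not try to cancel the factor $k-1$ from $d \mid n(k-1)$ (which would need $\gcd(d,k-1)=1$). Instead we use $d\mid k$ directly, so that $n \equiv nk - n(k-1) \equiv 0 \pmod d$. (In fact $\gcd(d,k-1)=1$ also holds, since $d\mid k$, but the direct manipulation avoids needing it.) The case $r=0$ is covered too, since then $n(k-1)=0$ forces $n=0$, which is trivially divisible by $d$; in any case $n\ge m\ge 1$ rules this out.
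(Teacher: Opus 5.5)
Your proof is correct and takes essentially the same approach as the paper: both use the sum invariant of Lemma~\ref{lem:movesum} to write $n(k-1) = rm - j_\sigma k$ and conclude that $\gcd(m,k)$ divides $n$. The only difference is in the last step: the paper reduces modulo $k$ to write $n$ directly as an integer combination of $m$ and $k$, while you show $d \mid n(k-1)$ and then use $d \mid nk$.
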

\begin{proof}
  Assume there is a solution. Then the final sum of all $n$ hypercups, which we denote as $s$, will be equal to $n(k-1)$.

  By Lemma~\ref{lem:movesum}, we also know there exist nonnegative integers $j_i$ for $1\le i\le r$ such that 
  \begin{align*}
      s=\sum_{i=1}^r m-j_ik.
  \end{align*}
 
  If we reduce the sum modulo $k$, we obtain
  \begin{align*}
      \sum_{i=1}^r m-j_ik &\equiv \sum_{i=1}^r m\pmod{k}\\ 
      &\equiv mr\pmod k.
  \end{align*}
If we reduce $n(k-1)$ modulo $k$, we obtain
$$n(k-1)\equiv -n\pmod k.$$
Since $\sum_{i=1}^r m-j_ik=s = n(k-1)$, we see that $-n\equiv mr \pmod k$. Thus $k~\mid~(-n -mr)$ and so for some integer $y$, 
    \begin{align*}
        ky &= -n-mr\\
        n&=k(-y)+m(-r).
    \end{align*}
  Note that $\gcd(m,k)$ divides $m$ and $k$, and hence any integer linear combination of $m$ and $k$. So $\gcd (m,k) \mid n$.
\end{proof}

\section*{Minimum Number of Moves: The Origin of $j_\sigma$}

Before proving the backwards direction of Theorem~\ref{thm:bigiff}, we will discuss another question answered in \cite{SS10} for $2-$hypercups: What is the minimum number of moves to reach a solution? Notably, in the original cup trick, in order to add an element of confusion (or, some might say, \#magic)  one does not do the minimum number of moves. In this section we work to motivate the formula given in Theorem~\ref{thm:numberofmoves}. 

Here's one simple situation: say $m\mid n$. If the number of hypercups you rotate at a time, $m$, is a divisor of the total number of hypercups, $n$, then the minimum number of moves must be $\frac{n(k-1)}{m}$. There is a simple algorithm we can use to show this:
    \begin{enumerate}
        \item Split the hypercups into equal groups of size $m$.
        \item For each group, rotate the hypercups in only that group $k-1$ times.
    \end{enumerate}
Note we have $\frac{n}{m}$ groups and each group gets $k-1$ moves. Thus, there are $\frac{n(k-1)}{m}$ moves.  Of course, we can't have fewer moves, since we need to reach sum $n(k-1)$, and the sum increases by at most $m$ each time. Huzzah! 

The minimum number of moves being  $\frac{n(k-1)}{m}$ would be great. But sadly, this is not true. In \cite{SS10}, the authors produce a table of the minimum number of moves for $1\leq m \leq n \leq 20$ for $2-$hypercups. They note ``there are no obvious patterns". Surely the authors would have seen that the answer was $\frac{n(2-1)}{m} = \frac{n}{m}$ if that were truly the answer. The first problem is that $\frac{n(k-1)}{m}$ doesn't even need to be an integer. For example, what if $n=12$, $m=5$, and $k=2$? Thus, sadly, the minimum number of moves can't be so simple.

Recall we are trying to get the sum of the states of the hypercups to be $n(k-1)$. But we don't need to get that sum by only ever adding $m$. Sometimes, we subtract $(k-1)$ by rotating a hypercup back to state $0$. In fact, recall Lemma~\ref{lem:movesum} which says each move adds $m-jk$ for some $0\le j \le m$. By the proof of Lemma~\ref{lem:movesum}, $j$ represents the number of hypercups rotated back to $0$ on the given move. 

Thus, for the $(n,m,k)$ hypercup problem to be solvable we need, for some number of moves $r$, there exist integers $j_1, j_2, \dots, j_r$, between $0$ and $m$, such that 
\begin{align}
   n(k-1) = \sum_{i=1}^r m-j_i k = rm - k \sum_{i=1}^r j_i.\label{eqn:desiredr}
\end{align}
Define $j_\sigma = \sum_{i=1}^r j_i$. Solving equation \ref{eqn:desiredr} for $r$ we get 
\begin{align}
    r = \dfrac{n(k-1)+j_\sigma k}{m}. \label{eqn:ris}
\end{align}
This means if we can find a nonnegative integer $j_\sigma$ that makes $r$ an integer, then $r$ could potentially be a number of moves that solves the $(n,m,k)$ hypercup problem! In fact, consider $n=12, m=5$ and $k=2$ again. Choosing $j_\sigma = 4$ we find the expression on the right of equation~\ref{eqn:ris} is
\begin{align*}
    \frac{12(2-1)+4\cdot 2}{5} = 4,
\end{align*}
which is not only an integer, but the minimum number of moves required for the $(12,5,2)$ hypercup problem \cite{SS10}. This motivates the structure of the formula and the Divisibility Condition in Theorem~\ref{thm:numberofmoves}. Moreover, we can interpret $j_\sigma$ the total number of times there is a hypercup that goes from state $k-1$ to state $0$. Woohoo!

At this point in the research journey, one might make the seemingly reasonable conjecture that the minimum number of moves is $r = \frac{n(k-1)+j_\sigma k}{m}$, where $j_\sigma$ is the smallest nonnegative integer such that  $r\in \mathbb{Z}$. Unfortunately, one would be incorrect. Consider the $(8,6,2)$ hypercup problem. Then, choosing $j_\sigma = 2$, we get 
\begin{align}
    \dfrac{n(k-1)+j_\sigma k}{m} = \dfrac{8(2-1)+2\cdot 2}{6} = 2.\label{eqn:wrong}
\end{align}
Since this is an integer, we could potentially solve this problem in $2$ moves.  But according to \cite{SS10}, the minimum number of moves is $3$, so something is wrong. The problem here is that, by the interpretation of $j_\sigma$, we need $2$ hypercups to return to state $0$ in $2$ moves. No hypercups can \textit{return} to state $0$ in the first move since they all start in state $0$. But then $2$ hypercups return to $0$ on the second move (the only other move), and aren't in state $1$, which they should be if we've done the correct $2$ moves to finish. So the problem hasn't actually been solved! More generally, we need the number of moves to allow for hypercups to reach state $k-1$, return to state $0$, and then get back to state $k-1$ a total of $j_\sigma$ times. In some cases, we may even need one particular hypercup, call it Sisyphus, to return to $0$ multiple times. We might even have multiple Sisyphuses (Sisyphi?) (Sisphyses?)\footnote{Consider $(n,m,k)=(5,4,7)$. It turns out $j_\sigma = 10$, which means every hypercup needs to reset twice. Also we are pretty sure it is Sysiphi.}. Imagine we are kind and we distribute the number of times a hypercup needs to return to state $0$ as evenly as possible. Then any one hypercup needs to return to state $0$ at most $\left\lceil \frac{j_\sigma}{n}\right\rceil$ times. The expression 
\begin{align*}
    (k-1) + k \left\lceil \dfrac{j_\sigma}{n}\right\rceil
\end{align*}
represents the number of rotations a hypercup needs to go back to $0$ and get back to state $k-1$ a total of $\left\lceil \frac{j_\sigma}{n}\right\rceil$ times. Certainly our number of moves must exceed that number. But in the $(8,6,2)$ hypercup problem, if $j_\sigma = 2$, there is a hypercup that needs
\begin{align*}
    (k-1) +  k\left\lceil\frac{j_\sigma}{n}\right\rceil 
    = (2-1) + 2\left\lceil\frac{2}{8}\right\rceil = 3,
\end{align*}
rotations, which is more than the $2$ moves given by equation~\ref{eqn:wrong}. This analysis motivates the Reset Condition stated in Theorem~\ref{thm:numberofmoves}. 

This is all well and good, but it is not a proof of Theorem~\ref{thm:numberofmoves}. Who says there isn't some other condition we are missing? Does a $j_\sigma$ meeting the Divisibility and Reset Conditions even exist in every case? And what about Theorem~\ref{thm:bigiff}? What does this even have to do with $\gcd(m,k)$ dividing $n$? Read on!

\section*{The Thirsty Algorithm}

For a moment, let's forget what $j_\sigma$ represents and simply choose $j_\sigma$ to be some nonnegative integer\footnote{We will see later how to choose a $j_\sigma$ that satisfies the Divisibility and Reset Conditions, and how those lead to The Thirsty Algorithm terminating with all hypercups in state $k-1$.}. With this choice, we describe a greedy algorithm called The Thirsty Algorithm. For more about greedy algorithms, see Section 3.1.5 in \cite{GreedyAlg}.

In the algorithm, start by assigning each hypercup an initial ``thirst" as follows:
 \begin{enumerate}
     \item Every hypercup initially gets a thirst of $k-1$.
     \item For $j_\sigma$ hypercups add an extra $k$ to their thirst. We distribute $j_\sigma$ as evenly as possible among the hypercups.   In general, each hypercup gets $k \left\lceil \frac{j_{\sigma}}{n} \right\rceil$ or $k (\left\lceil \frac{j_{\sigma}}{n} \right\rceil- 1)$ additional thirst.
 \end{enumerate} 
 For the ``thirsty" (AKA greedy) part, on each move rotate $m$ hypercups with the highest thirst and decrease each of those hypercups' thirst by $1$. 

Let's see the algorithm in action. Consider the $(5,3,2)$ hypercup problem. For reasons we shall soon see, we require for 2 hypercups to reset once, and hence, $j_\sigma=2$. Because we are working with $2-$hypercups, we will assign each hypercup an initial thirst of $2-1=1.$ We distribute the $j_\sigma=2$ rotations back to zero as evenly as possible. This results in two hypercups rotating back to $0$ once, and three hypercups that never reset to zero. We can choose which two hypercups reset arbitrarily. Adding $k=2$ to the first two hypercups, we obtain the thirsts,
\begin{equation*}
    3 \quad 3\quad 1\quad 1\quad 1.
\end{equation*}
We will now perform the steps of the thirsty algorithm, rotating the hypercups with the highest thirst first, and decreasing thirst every rotation. Sometimes there are 2 or more hypercups of equal thirst in the highest $m$ thirsts, like in the initial states in Table~\ref{tab:thirstyalg}. In that case, it's okay to choose any set of hypercups with the highest $m$ thirsts.
\begin{table}[ht]
\centering
\begin{tabular}{c|cc}
\textbf{Move} & \textbf{Thirsts} & \textbf{States} \\
\hline
Initial & 3 3 1 1 1 & 0 0 0 0 0 \\
1 & 2 2 0 1 1 & 1 1 1 0 0 \\
2 & 1 1 0 0 1 & 0 0 1 1 0 \\
3 & 0 0 0 0 0 & 1 1 1 1 1 \\
\end{tabular}
\caption{The Thirsty Algorithm steps for the $(5,3,2)$ hypercup problem.}
\label{tab:thirstyalg}
\end{table}

We can see that after the final move, the thirst of the hypercups are all 0, and the state of all the hypercups are all $k-1=1.$ Thus, we have found a solution to solving the hypercups $(5,3,2)$.

\section*{The Sufficiency of $\gcd(m,k)\mid n$}\label{sec:backwards}

Now that we have the motivation for the minimum move formula to help our understanding, and The Thirsty Algorithm as a tool, we will prove the backwards direction of Theorem~\ref{thm:bigiff}. That is, in this section, we will prove that if $\gcd(m,k)\mid n$ then the $(n,m,k)$ hypercup problem is solvable.

Let's start with a lemma that shows, given a $j_\sigma$ that satisfies the conditions in Theorem~\ref{thm:numberofmoves}, The Thirsty Algorithm works as desired.
\begin{lemma}\label{lem:thirstyworks}
    For the $(n,m,k)$ hypercup problem, if there exists a nonnegative integer $j_\sigma$ such that the Divisibility and Reset Conditions are satisfied, The Thirsty Algorithm terminates in $\frac{n(k-1)+j_\sigma k}{m}$ moves with all hypercups in state $k-1$.

\end{lemma}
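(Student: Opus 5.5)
Set $r=\frac{n(k-1)+j_\sigma k}{m}$, which is an integer by the Divisibility Condition. The plan is to view The Thirsty Algorithm as a scheduling problem. Each hypercup's thirst counts the rotations it still needs. We show that the greedy rule drives every thirst to exactly $0$ in exactly $r$ moves. Then we read off the final states.

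First, record the initial data. Hypercup $i$ starts with thirst $T_i=(k-1)+k t_i$, where $t_i\in\{q,q-1\}$ and $q=\left\lceil \frac{j_\sigma}{n}\right\rceil$. (If $j_\sigma=0$, every $t_i=0$.) The $t_i$ sum to $j_\sigma$, so
\[\sum_{i=1}^n T_i = n(k-1)+j_\sigma k = rm.\]
Also $\max_i T_i \le (k-1)+kq \le r$ by the Reset Condition.

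The heart of the proof is the following invariant, proved by induction on the number $s$ of moves made, for $0\le s\le r$:
\begin{enumerate}
\item the total remaining thirst is $(r-s)m$;
\item every remaining thirst is at most $r-s$.
\end{enumerate}
The case $s=0$ is the computation above. Now suppose the invariant holds after $s<r$ moves.
\begin{itemize}
\item \emph{A legal move exists.} Let $p$ be the number of hypercups with positive thirst. Each has thirst at most $r-s$, so $(r-s)m\le p(r-s)$, giving $p\ge m$. Hence the greedy move selects $m$ hypercups, all with positive thirst, and no thirst becomes negative.
\item \emph{The cap holds.} Let $c$ be the number of hypercups with thirst exactly $r-s$. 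Then $c(r-s)\le (r-s)m$, so $c\le m$. The greedy rule therefore rotates all of them, and afterwards every thirst is at most $r-s-1$.
\item \emph{The total drops correctly.} The total falls by exactly $m$, to $(r-s-1)m$.
\end{itemize}
At $s=r$ the invariant says the total thirst is $0$, so every thirst is $0$. I expect this invariant step to be the main obstacle. Everything else is bookkeeping, but this is where both conditions are genuinely used: the Divisibility Condition makes $r$ an integer with total thirst exactly $rm$, and the Reset Condition supplies the initial cap $\max_i T_i\le r$. Together they rule out getting stuck with too few thirsty hypercups, or with one hypercup that still needs more rotations than moves remain.

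Finally, each move lowers a hypercup's thirst by $1$ exactly when that hypercup is rotated. So over the $r$ moves hypercup $i$ is rotated exactly $T_i=(k-1)+kt_i$ times. It starts in state $0$, and each rotation adds $1$ modulo $k$, so its final state is $T_i \bmod k = k-1$. Thus the algorithm terminates after $r=\frac{n(k-1)+j_\sigma k}{m}$ moves with all hypercups in state $k-1$. As a consistency check with Lemma~\ref{lem:movesum}, hypercup $i$ resets to $0$ exactly $t_i$ times, for $j_\sigma$ resets in total.
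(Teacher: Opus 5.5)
Your proof is correct and takes essentially the same approach as the paper. The paper proves the same cap (no thirst ever exceeds the number of remaining moves) by a minimal-counterexample form of your count $c\le m$, using $T^{i-1}=m(M-i+1)$. It then uses the same pigeonhole bound to guarantee $m$ thirsty hypercups at each step, and your forward induction simply packages these two steps into one invariant.
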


\begin{proof}
Assume that $j_\sigma$ is a nonnegative integer satisfying
    \begin{enumerate}
        \item \textbf{Divisibility Condition}: $m\mid (n(k-1)+j_\sigma k)$, and
        \item \textbf{Reset Condition}: $\frac{n(k-1)+j_\sigma k}{m}\ge (k-1) + k\left\lceil\frac{j_\sigma}{n}\right\rceil$.
    \end{enumerate} 

  We apply The Thirsty Algorithm using such a $j_\sigma$. First, some notation. Let $t_c^i$ be the thirst of a hypercup $c$ after move $i$, and let $T^i$ be the total thirst after move $i$, with $T^0$ being the initial thirst. By The Thirsty Algorithm, $T^0 = n(k-1) + j_\sigma k$, which is a multiple of $m$ by the divisibility condition. On each step of the algorithm, we subtract $m$ from the current total thirst, and so the thirst after move $i$, is $T^i = T^0 - mi$. If the algorithm runs to completion the total number of moves will be $M(m,n,k)= \frac{T^0}{m}=\frac{n(k-1)+j_\sigma k}{m}$. Define the number of moves, $M(m,n,k) = M$ for the remainder of the proof. Then $T^0 = mM$ and $T^i=mM - mi$.
  
  We claim that at any step, no hypercup has a thirst greater than the number of remaining moves.  Suppose, for the sake of a contradiction, a hypercup $c^*$ at move $i$ has greater than thirst than the number of remaining names. That is, $t_{c^*}^i > M - i$. Assume $i$ is the first move for which this occurs. By the reset condition, the initial thirst of each hypercup is at most $M$ and so $i\geq 1$. Thus, at move $i-1\ge 0$, $t_{c^*}^{i-1} \leq M-(i-1) = M-i+1$. Together with $t_{c^*}^i > M - i$, this implies that $t_{c^*}^i = M-i + 1$ and the hypercup $c^*$ was not rotated on move $i$. By The Thirsty Algorithm, there exist $m$ other hypercups with thirst at least $M - i + 1$ after move $i-1$. This gives  a total of $m+1$ hypercups (including $c^*$) with at least as much thirst as $c^*$, so 
  $$T^{i-1} \geq (m+1)(M-i+1).$$
  However, we also know that 
  $$T^{i-1} = m(M-i+1).$$
  This is a contradiction. So, the thirst every hypercup at each move is at most the number of moves remaining. 
  
  We now show, that as long as the thirst is positive, there will be a move one can make. Say, for the sake of a contradiction, after some move $j$, $T^j>0$, but only $m-1$ hypercups have remaining thirst. Since $T^j = m(M-j)$, by the pigeonhole principle, some hypercup has thirst at least $M-j+1$. But only $M-j$ moves remain, so we have a contradiction. So, as long as $T^j>0$, there will be at least $m$ hypercups with remaining thirst. Thus, we can rotate $m$ hypercups at a time so that each hypercup's total thirst will reach $0$ in $M(m,n,k) = \frac{n(k-1)+j_\sigma k}{m}$ moves.

  Since the initial thirst of a hypercup is $(k-1)+zk$, for $z\in \{\left\lceil \frac{j_{\sigma}}{n}\right\rceil, \left\lceil\frac{j_{\sigma}}{n}  \right\rceil- 1\}$, and the final thirst is $0$, then we've rotated that hypercup $(k-1)+zk$ times. Since the hypercup started in state $0$, the final state of the hypercup will be $k-1$.
\end{proof}

In the next lemma, we prove an appropriate $j_\sigma$ exists. Inside the proof is a remarkable fact - if you find a linear combination of $m$ and $k$ equaling $n(k-1)$, say $mx+ky = n(k-1)$ for $x,y\in \mathbb{Z}$, and then subtract ``enough" from the integer coefficient of $k$, the resulting integer coefficient of $k$ is $-j_\sigma$, the number of resets, and the resulting integer coefficient of $m$ is $M(m,n,k)$, the number of moves. If $\gcd(m,k)\mid n$, then there exists a linear combination of $m$ and $k$ equaling $n(k-1)$ by Bezout's Identity, which can be found using the Extended Euclidean Algorithm. So not only does $j_\sigma$ exist, we have a way to find it! For more about the Extended Euclidean Algorithm and Bezout's identity, see \cite{NumberTheory}.

\begin{lemma}\label{lem:jsigexists}
    If $\gcd(m,k)\mid n$ then there exists $j_\sigma$ satisfying the Divisibility and Reset Conditions.
\end{lemma}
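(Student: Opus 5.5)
Let $g=\gcd(m,k)$ and assume $g\mid n$. The plan is to find a $j_\sigma$ that satisfies the Divisibility Condition, then push it upward along the arithmetic progression of valid values until the Reset Condition also holds.

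First, the Divisibility Condition. We need $j$ with $kj\equiv -n(k-1)\pmod m$. A linear congruence $kj\equiv b\pmod m$ is solvable exactly when $\gcd(k,m)\mid b$. Here $b=-n(k-1)$, and $g\mid n$ gives $g\mid n(k-1)$. Concretely, Bezout's Identity gives integers $x_0,y_0$ with $mx_0+ky_0=n(k-1)$, and we set $j=-y_0$. Moreover, if $j$ is a solution then so is $j+t\frac{m}{g}$ for every integer $t$, because $k\cdot\frac{m}{g}=m\cdot\frac{k}{g}$ is a multiple of $m$. Adding a large multiple of $m/g$ therefore gives a nonnegative solution $j_0$. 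The full set of nonnegative solutions is the progression $j_0+t\frac{m}{g}$ with $t\ge 0$, after reducing $j_0$ to the least one.

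Second, the Reset Condition along this progression. Write $j=j_0+t\frac{m}{g}$ and
\[
r(j)=\frac{n(k-1)+jk}{m}.
\]
This grows linearly in $j$ with slope $k/m$. The right-hand side satisfies
\[
(k-1)+k\left\lceil \tfrac{j}{n}\right\rceil < (k-1)+k\left(\tfrac{j}{n}+1\right),
\]
which grows with slope $k/n$. Since $m\le n$ we have $k/m\ge k/n$, so the comparison needs care.

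\begin{enumerate}
\item If $m<n$, then $k/m>k/n$ strictly. The difference
\[
r(j)-\Big[(2k-1)+\tfrac{kj}{n}\Big]=\frac{n(k-1)}{m}-(2k-1)+kj\Big(\frac{1}{m}-\frac{1}{n}\Big)
\]
tends to $\infty$ as $t\to\infty$. Hence some $t$ makes the Reset Condition hold.
\item If $m=n$, then the problem is the $m\mid n$ case and $j_\sigma=0$ works directly. The Divisibility Condition holds because $n\mid n(k-1)$, and the Reset Condition reads $k-1\ge k-1$. This also matches the algorithm described earlier for $m\mid n$.
\end{enumerate}

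Because the conditions hold for some $j$, the least nonnegative $j_\sigma$ satisfying both exists by well-ordering. That is all the lemma asks for.

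The main obstacle is the second step. The ceiling term must be handled so that the linear comparison is honest, and the boundary case $m=n$, where the two slopes coincide, must be separated out. I would first try the crude bound $\lceil j/n\rceil\le j/n+1$ shown above. If one wanted an explicit $j_\sigma$ matching the paper's remark about ``subtracting enough'' from the coefficient of $k$, I would choose $t$ explicitly: solve
\[
kj\Big(\frac{1}{m}-\frac{1}{n}\Big)\ge (2k-1)
\]
for $j$, and round up to the progression. That explicit choice is not needed for existence.
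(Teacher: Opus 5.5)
Your proof is correct and follows essentially the same route as the paper: Bezout gives a solution of the Divisibility Condition, and moving along the solution family lets the move count, which grows with slope $k/m$ in $j_\sigma$, eventually dominate the reset requirement, which grows with slope at most $k/n$. Your separate treatment of $m=n$ (taking $j_\sigma=0$) also covers a case where the paper's threshold $\frac{n}{n-m}$ is undefined, and your inequality bookkeeping avoids the paper's slip of writing $mx_0+ky_0=n$ in place of $n(k-1)$.
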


\begin{proof}
    Suppose $\gcd(m,k)\mid n$. Then, by Bezout's identity there exists an integer linear combination of $m$ and $k$ equal to $\gcd(m,k)$, which can be found by using the Euclidean Algorithm. Multiplying that linear combination through by $\frac{n(k-1)}{\gcd(m,k)}\in\mathbb{Z}$ there exist integers $x$ and $y$ satisfying the linear combination
    $$mx+ky = n(k-1).$$
    In fact, there are infinitely many such pairs of integers. Given a solution $(x,y)\in\mathbb{Z}\times \mathbb{Z}$, we'll have, for any $i\in\mathbb{Z}$,
    $$m(x+ki) + k(y-mi) = n(k-1).$$
    Thus, there exists a pair $(x_0, y_0)$ where $mx_0+ky_0 = n(k-1)$ and $y_0\leq 0$. Let $j_\sigma = -y_0$. For any non-positive $y_0$, $m\mid n(k-1)-ky_0$, or $m\mid n(k-1)+ kj_\sigma$, which means the Divisibility Condition is satisfied. Additionally,
    $$x_0 = \frac{n(k-1) +kj_\sigma}{m} = M(m,n,k).$$

    So, we need only show that there is a pair $(x_0,y_0)$ with $y_0<0$ such that 
    $$x_0 \geq (k-1) + k \left\lceil \frac{-y_0}{n}\right\rceil.$$
    Continue adding $k_i$ to the coefficient of $m$ until you find $x_0\geq 0$ such that
        \begin{align}
            x_0 \geq \left( \frac{n}{n-m}\right) [(k-1)+k(n-1)-1] \label{ineq:choosex0}
        \end{align}
    and let $y_0$ be such that $mx_0+ky_0 = n(k-1)$.
    Note the right hand side is some fixed number once we choose $n,m$ and $k$. Because we can continue to add to $ki$ to the coefficient of $m$ we can reach this inequality. Plus, while we're doing that, we are subtracting from the coefficient of $k$, and so that coefficient remains negative.

  Starting with the inequality (\ref{ineq:choosex0}), we will show the Reset Condition is satisfied.   Note since $m\le n$, $\frac{n}{n-m} \geq 0$. Also, since $mx_0+ky_0 = n$, $y_0=\frac{n-mx_0}{k}$. Then,
    \begin{align*}
            x_0 &\geq \left( \frac{n}{n-m}\right) ((k-1)+k(n-1)-1)\\
            x_0 \left(\frac{n-m}{n}\right) &\geq (k-1)+k(n-1)-1\\
            x_0 &\geq (k-1) + k(n-1) +\left(\frac{mx_0}{n} - 1\right)\\
            x_0 &\geq (k-1) + k(n-1) + k \frac{\left(\frac{mx_0-n}{k}\right)}{n}\\
            x_0 &\geq (k-1) + k\left( (n-1) + \left(\frac{-y_0}{n}\right)\right)\\
            x_0 & \geq (k-1) + k \left\lceil \frac{-y_0}{n}\right\rceil.
        \end{align*}
Letting $j_\sigma = -y_0$ we see that we have the Reset Condition. Thus, there exists $x_0$ and $y_0$ satisfying both conditions.
\end{proof}

Lemmas \ref{lem:thirstyworks} and \ref{lem:jsigexists} prove that if $\gcd(m,k)\mid n$ then the $(n,m,k)$ hypercup problem is solvable.  This concludes the proof of Theorem~\ref{thm:bigiff}. Yippee!

\section*{Minimum Number of Moves: A Proof}

Finally, we'll prove Theorem~\ref{thm:numberofmoves}.

\begin{proof}
    Let $r$ be some (not necessarily minimum) number of moves that solves the $(n,m,k)$ hypercup problem. Let $S^i$ be the sum of the states of the hypercups after move $i$. Then $S^r = n(k-1)$ and, by Lemma~\ref{lem:movesum}, there exist nonnegative integers $j_i$ for $1\le i\le r$ such that $S^r = \sum_{i=1}^r m - j_i k =rm - j_\sigma k$, where $j_\sigma = \sum_{i=1}^r j_i$. Therefore, solving for $r$,
        \begin{align}
            r = \frac{n(k-1)+j_\sigma k}{m} \label{eqn:Rmoves}
        \end{align}
    for some nonnegative integer $j_\sigma$. Since $r$ is a number of moves that solves the problem, $r$ is an integer, and so the Divisibility Condition is satisfied. Moreover, $j_\sigma$ corresponds to the number of resets, so $j_\sigma$ is nonnegative. 
    
    The number of times any individual hypercup $c$ is rotated in $r$ moves is $r_c = (k-1) + k j_c$ where $j_c$ is the number of times hypercup $c$ is reset back to $0$.  One can distribute $j_\sigma$ so that any hypercup has at most $\left\lceil \frac{j_\sigma}{n}\right\rceil$ resets and so $j_c\le j_\sigma$. Clearly, you can't rotate a hypercup more times than you have moves, so $r_c \le r$ which gives the Reset Condition. 
    So, for any number of moves that solves the problem, there exists a $j_\sigma$ that satisfies equation \ref{eqn:Rmoves} and the Divisibility and Reset Conditions. Moreover, by Lemma~\ref{lem:thirstyworks}, any nonnegative $j_\sigma$ satisfying these conditions leads to a solution via The Thirsty Algorithm. 
    
    Since equation (\ref{eqn:Rmoves}) is increasing in $j_\sigma$, the minimum number of moves corresponds to the least nonnegative $j_\sigma$ satisfying the Divisibility and Reset Conditions.
\end{proof}

If the reader is interested in exploring more about hypercups, consider the following questions.  As we described, we used Python's NetworkX library to generate digraphs corresponding to the $(n,m,k)$-hypercup problem. The problems that were solvable corresponded to digraphs where the initial state and the final state were nodes in the same component. Can one determine how many components of the diagram there will be? And what nodes will be in each component? In another direction, what if you could have a heterogeneous mix of hypercups? For example, what if some of the hypercups were $2-$hypercups and some of the hypercups were $3-$hypercups? We do not know the answers, which means you could prove an original theorem!

\bibliographystyle{vancouver}
\bibliography{Cups.bib}

\end{document}